\newtheorem{theorem}{Theorem}[section]
\newtheorem{proposition}[theorem]{Proposition}
\newtheorem{lemma}[theorem]{Lemma}
\begin{document}
\baselineskip=16pt

\title[M-curves and symmetric products]{M-curves and symmetric products}

\author[I. Biswas]{Indranil Biswas}

\address{School of Mathematics, Tata Institute of Fundamental
Research, Homi Bhabha Road, Mumbai 400005, India}

\email{indranil@math.tifr.res.in}

\author[S. D'Mello]{Shane D'Mello}

\address{Department of Mathematics,
Indian Institute of Science Education and Research,
Dr. Homi Bhabha Road, Pune 411008, India}

\email{shane.dmello@iiserpune.ac.in}

\subjclass[2010]{14P25, 14H40}

\keywords{M-curve; symmetric product; real locus}

\date{}

\begin{abstract}
Let $(X\, , \sigma)$ be a geometrically irreducible smooth 
projective M-curve of genus $g$ defined over the field of real numbers.
We prove that the $n$--th symmetric product of $(X\, , \sigma)$ is an M-variety
for $n\,=\,2\, ,3$ and $n \,\geq\, 2g -1$.
\end{abstract}

\maketitle

\section{Introduction}

A nonsingular geometrically irreducible real projective curve is a pair $(X\, , 
\sigma)$, where $X$ is a compact connected Riemann surface and $\sigma$ is an 
anti-holomorphic involution of $X$. The fixed point set of
this $\sigma$ is called the real 
part of $X$ and it is denoted by $X^{\sigma}$; topologically,
the subset $X^{\sigma}$ is a disjoint union of circles. 
Harnack proved that the number of components of $X^{\sigma}$ is bounded above by 
$g+1$, where $g$ is the genus of $X$~
\cite{harnack1876ueber}. Curves which have exactly the maximum number (i.e.,
genus $+1$) of components of the real
part are called M-curves. Classifying real algebraic curves up to 
homeomorphism is straightforward, however, classifying even planar non-singular real 
algebraic curves up to isotopy in $\mathbb{RP}^2$ has only been completely 
successful up to 
degree~7~\,\cite{viro1986progress,wilson1978hilbert,rokhlin1978complex}. The M-curves play 
a special role in this classification; a lot of research has been done on M-curves (see 
\cite{chislenko1982m,chevallier2002four,mishachev1975complex,korchagin1986m,shustin1987new}).
In the case of planar curves, many restrictions on the topology of M-curves are known 
(see~\cite{fiedler1983new,nikulin1984involutions}) and have helped in the rigid isotopy 
classification for low degrees~\cite{gudkov1974topology}.

The notion of an M-curve can be generalized to higher dimensional non-singular real 
algebraic varieties. For this, consider a non-singular real algebraic variety $(Y\, ,\eta)$,
where $\eta$ is an anti-holomorphic involution of a smooth
complex projective variety $Y$. As before, the fixed-point locus $Y^{\eta}$ is called
the real part of 
$(Y\, ,\eta)$. The following inequality is a generalization of Harnack's 
bound~\cite[p. 72, Corollary A.2]{wilson1978hilbert} \cite{thom1965homologie,bredon1972introduction}:
\begin{equation}\label{smith}
\sum_{i\geq 0} \dim \mathrm{H}_i(Y^{\eta},\, \mathbb{Z}/2\mathbb{Z}) \,\leq\,
\sum_{i\geq 0} \dim \mathrm{H}_i(Y,\, \mathbb{Z}/2\mathbb{Z})\, .
\end{equation}
The pair $(Y\, ,\eta)$ is called an M-variety if the inequality in \eqref{smith} 
is an equality (see \cite{BMMV}).

Since M-curves play a special role in the topology of real algebraic varieties, it is 
useful to have a criterion for M-curves. It was proved earlier 
that a curve defined over $\mathbb R$ is an M-curve if and only if its Jacobian is an 
M-variety \cite{biswas}. We use this result of \cite{biswas} and the Picard bundle to prove that the 
$n$--th symmetric product (the definition is recalled later) of an M-curve of genus $g$ is also an M-variety as long as $n 
\,\geq\, 2g-1$. Although we cannot use the Picard bundle when $n\,<\, 2g-1$, by analyzing the 
topology of the symmetric product, we are able to prove that for $n\, \leq\, 3$, the 
result still holds for all $g$. The problem remains open for $4 \,\leq\, n \,\leq\, 2g-2$.

\section{Second and third symmetric products}

Unless stated otherwise, all homologies will be considered with coefficients in
$\mathbb{Z}/2\mathbb{Z}$.

Let $X$ be a compact connected Riemann surface of genus $g$. For any $d\, \geq\, 1$, the
symmetric product $\mathrm{Sym}^d(X)$ is the quotient of $X^d$ by the natural action of
permutations of $\{1\, ,\cdots\, ,d\}$.

\begin{lemma}\label{lem1}
\mbox{}
\begin{enumerate}
\item The sum $\sum_{i\geq 0}
\dim H_i(\mathrm{Sym}^2(X);\, \mathbb{Z}/2\mathbb{Z})$ is $3+3g+2g^2$.

\item The sum $\sum_{i\geq 0}
\dim H_i(\mathrm{Sym}^3(X);\, \mathbb{Z}/2\mathbb{Z})$ is $4+14g/3+2g^2+4g^3/3$.
\end{enumerate}
\end{lemma}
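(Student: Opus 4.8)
The plan is to reduce both computations to Macdonald's description of the (co)homology of the symmetric powers of a curve. Recall that $H_*(\mathrm{Sym}^d(X);\, \mathbb{Z})$ is torsion-free for every $d\, \geq\, 1$, and that the rational Poincar\'e polynomials $P_t(Z) \,:=\, \sum_{i\geq 0}\dim_{\mathbb{Q}}H_i(Z;\,\mathbb{Q})\,t^i$ of the symmetric powers of $X$ assemble into the generating identity
\[
\sum_{d\geq 0} P_t\big(\mathrm{Sym}^d(X)\big)\, q^d \;=\; \frac{(1+tq)^{2g}}{(1-q)\,(1-t^2q)}\, ,
\]
where $q$ is a formal variable. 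Since the integral homology is torsion-free, the universal coefficient theorem gives $\dim H_i(\mathrm{Sym}^d(X);\, \mathbb{Z}/2\mathbb{Z}) \,=\, \dim_{\mathbb{Q}}H_i(\mathrm{Sym}^d(X);\,\mathbb{Q})$ for every $i$. Hence the number $\sum_{i\geq 0}\dim H_i(\mathrm{Sym}^d(X);\, \mathbb{Z}/2\mathbb{Z})$ to be evaluated equals $P_1\big(\mathrm{Sym}^d(X)\big)$, the value of the Poincar\'e polynomial at $t\,=\,1$.

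Specializing the generating identity at $t\,=\,1$ gives $\sum_{d\geq 0}P_1\big(\mathrm{Sym}^d(X)\big)\,q^d \,=\, (1+q)^{2g}(1-q)^{-2}$. Writing $(1+q)^{2g} \,=\, \sum_{j\geq 0}\binom{2g}{j}q^j$ and $(1-q)^{-2} \,=\, \sum_{k\geq 0}(k+1)q^k$ and multiplying, I would read off
\[
\sum_{i\geq 0}\dim H_i\big(\mathrm{Sym}^d(X);\, \mathbb{Z}/2\mathbb{Z}\big) \;=\; \sum_{j=0}^{d}(d-j+1)\binom{2g}{j}\, .
\]
It then remains only to evaluate the right-hand side for $d\,=\,2$ and $d\,=\,3$. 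For $d\,=\,2$ this is $3\binom{2g}{0}+2\binom{2g}{1}+\binom{2g}{2} \,=\, 3+4g+g(2g-1) \,=\, 2g^2+3g+3$, which is part~(1). For $d\,=\,3$ it is $4\binom{2g}{0}+3\binom{2g}{1}+2\binom{2g}{2}+\binom{2g}{3}$; expanding the binomials and collecting terms over a common denominator gives $\tfrac{4g^3}{3}+2g^2+\tfrac{14g}{3}+4$, which is part~(2).

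There is no genuine obstacle in this argument: once Macdonald's results are granted, the rest is bookkeeping with binomial coefficients, and the only input that must be quoted rather than recomputed is the torsion-freeness of $H_*(\mathrm{Sym}^d(X);\,\mathbb{Z})$, which is precisely what permits replacing $\mathbb{Z}/2\mathbb{Z}$--coefficients by $\mathbb{Q}$--coefficients. If one wished to avoid Macdonald's generating function entirely, an alternative for $d\,=\,2$ is to use that $\mathrm{Sym}^2(X)$ is a closed connected orientable $4$--manifold with $\pi_1\,=\,H_1(X;\,\mathbb{Z})$ (so $b_0\,=\,b_4\,=\,1$ and $b_1\,=\,b_3\,=\,2g$ by Poincar\'e duality), together with the Euler characteristic formula $\chi(\mathrm{Sym}^d(X)) \,=\, \binom{\chi(X)+d-1}{d}$ to pin down $b_2$; for $d\,=\,3$ one would in addition need the stable value $\dim H_2(\mathrm{Sym}^3(X);\,\mathbb{Z}/2\mathbb{Z})\,=\,\binom{2g}{2}+1$, which follows from the Dold--Thom identification $\mathrm{Sym}^{\infty}(X)\,\simeq\,(S^1)^{2g}\times\mathbb{CP}^{\infty}$. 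Either way the content is the same, so I would present the Macdonald route as the main argument.
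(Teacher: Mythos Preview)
Your argument is correct and follows essentially the same route as the paper: invoke Macdonald's formula for the Poincar\'e polynomial of $\mathrm{Sym}^n(X)$, specialize at $t=1$ to obtain the coefficient of $q^n$ in $(1+q)^{2g}(1-q)^{-2}$, and use torsion-freeness of $H_*(\mathrm{Sym}^n(X);\mathbb{Z})$ to pass from $\mathbb{Q}$- to $\mathbb{Z}/2\mathbb{Z}$-coefficients. The only cosmetic difference is that you package the coefficient extraction as the closed form $\sum_{j=0}^d (d-j+1)\binom{2g}{j}$, whereas the paper multiplies out the series directly.
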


\begin{proof}
The Poincar\'e polynomial (homology with coefficients in $\mathbb{Q}$) of
the $n$-th symmetric product of a topological space, whose $i$-th Betti numbers
is $B_i$, is the coefficient of $t^n$ in the power series expansion
of~\cite[p. 568, (8.5)]{macdonald1962poincare} (here $x$ is the indeterminate of the
Poincar\'e polynomial):
\begin{equation}
  \frac{(1+xt)^{B_1}(1+x^3t)^{B_3}\ldots}{(1-t)^{B_0}(1-x^2t)^{B_2}(1-x^4t)^{B_4}\ldots}
\end{equation}

For $X$ we have $B_0\,=\,B_2\,=\,1$, $B_1\,=\,2g$ and $B_i\,=\, 0$ for
all $i\, \geq\,2$. Therefore, the
Poincar\'e polynomial of $\mathrm{Sym}^n(X)$ is the coefficient of $t^n$ in 
\begin{equation}\label{el3}
  \frac{(1+xt)^{2g}}{(1-t)(1-x^2t)}\, .
\end{equation}

Setting $x\,=\,1$, the sum of the Betti numbers of $\mathrm{Sym}^k(X)$ is the coefficient
of $t^k$ in \eqref{el3}, so it is
\begin{equation}
  \begin{aligned}
    & \frac{(1+t)^{2g}}{(1-t)(1-t)}\,= \,  \frac{(1+t)^{2g}}{(1-t)^2}\,=\,
  (1+t)^{2g}(1-t)^{-2} \,=\,\\
  & (1+2gt+g(2g-1)t^2+\frac{2g(2g-2)(2g-3)}{3!}t^3+\ldots)(1+2t+3t^2+4t^3+\ldots)\, .
  \end{aligned}
\end{equation}
  Therefore, the sum of the Betti numbers of $\mathrm{Sym}^2(X)$ is
  \begin{equation}
    \label{sym2}
  3+3g+2g^2\, ,
  \end{equation}
while the sum of the Betti numbers of $\mathrm{Sym}^3(X)$ is
  \begin{equation}
    \label{sym3}
  4+14g/3+2g^2+4g^3/3\, .
  \end{equation}

The homology of $\mathrm{Sym}^n(X)$, with coefficients in $\mathbb{Z}$, is torsion-free \cite[p. 329, (12.3)]{Ma2}, so
by the universal coefficient theorem
$$\mathrm{H}_i(X;\,\mathbb{Z}/2\mathbb{Z})\,=\,
\mathrm{H}_i(X;\, \mathbb{Z}) \otimes \mathbb{Z}/2\mathbb{Z}\ \ 
\text{ and }\ \ \mathrm{H}_i(X;\mathbb{Q})\,=\,\mathrm{H}_i(X\, \mathbb{Z}) \otimes \mathbb{Q}\, .$$
Therefore, the sums of the Betti numbers
of $\mathrm{Sym}^2(X)$ and $\mathrm{Sym}^3(X)$ with $\mathbb{Z}/2\mathbb{Z}$ coefficients
are the same as in equations \eqref{sym2} and \eqref{sym3} respectively.
\end{proof}

Assume that $X$ admits an anti-holomorphic involution
$$
\sigma\, :\, X\, \longrightarrow\, X\, .
$$
For any $d\, \geq\, 1$, let $\sigma^d\, :\, \mathrm{Sym}^d(X)\, \longrightarrow\,
\mathrm{Sym}^d(X)$ be the anti-holomorphic involution defined by
$$
(x_1\, ,\cdots\, ,x_d)\, \longmapsto\,(\sigma(x_1)\, ,\cdots\, ,\sigma(x_d))\, ;
$$
in particular, $\sigma^1\,=\, \sigma$.

We will need the following lemma for the next proposition.
\begin{lemma}
The symmetric product $\mathrm{Sym}^2(\mathbb{RP}^1)$ is homeomorphic to a M\"obius band. The boundary
of $\mathrm{Sym}^2(\mathbb{RP}^1)$ is the diagonal of $\mathrm{Sym}^2(\mathbb{RP}^1)$.
\end{lemma}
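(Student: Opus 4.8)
The plan is to write down an explicit homeomorphism. Identify $\mathbb{RP}^1$ with the unit circle $S^1\subset\mathbb C$, so a point of $\mathrm{Sym}^2(\mathbb{RP}^1)$ is an unordered pair $\{a,b\}$ with $|a|\,=\,|b|\,=\,1$; such a pair is the same thing as the monic quadratic $z^2-pz+q$ with $p\,=\,a+b$ and $q\,=\,ab$. Writing $a\,=\,e^{i\alpha}$, $b\,=\,e^{i\beta}$ one has $q\,=\,e^{i(\alpha+\beta)}$, while $p\,=\,2\cos\bigl(\tfrac{\alpha-\beta}{2}\bigr)\,e^{i(\alpha+\beta)/2}$. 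First I would use this to attach to $\{a,b\}$ a square root $\zeta$ of $q$ (namely $e^{i(\alpha+\beta)/2}$) together with $r\,:=\,p/\zeta$, which then is real and lies in $[-2,2]$ (it equals $2\cos\bigl(\tfrac{\alpha-\beta}{2}\bigr)$ up to sign). Since $q$ has exactly two square roots, the ambiguity in this construction is precisely the substitution $(\zeta,r)\,\mapsto\,(-\zeta,-r)$.

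The next step is to verify that $\{a,b\}\,\longmapsto\,[(\zeta,r)]$ is a well-defined continuous map to the quotient $M\,:=\,\bigl(S^1\times[-2,2]\bigr)/\!\sim$ with $(\zeta,r)\,\sim\,(-\zeta,-r)$, and that it is a bijection. Well-definedness and continuity come down to the sign bookkeeping above (a local continuous choice of $\alpha,\beta$ changes $\zeta$ and $r$ only by the relation defining $\sim$). Injectivity is clear because $(\zeta,r)$ recovers $(p,q)\,=\,(\zeta r,\zeta^2)$ hence $\{a,b\}$; surjectivity follows from the computation that for any $\zeta\in S^1$ and $r\in[-2,2]$ the roots of $z^2-\zeta r\,z+\zeta^2$ are $\zeta\bigl(r\pm i\sqrt{4-r^2}\bigr)/2$, which both have modulus $1$. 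As a continuous bijection of compact Hausdorff spaces this is a homeomorphism. Finally, $(\zeta,r)\,\mapsto\,(-\zeta,-r)$ is a free, orientation-reversing involution of the annulus $S^1\times[-2,2]$, so its quotient $M$ is a M\"obius band.

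For the boundary statement: $\partial M$ is the image of $S^1\times\{-2,2\}$, which is a single circle because $(\zeta,2)$ and $(-\zeta,-2)$ are identified; and a point lies there exactly when $r\,=\,\pm2$, i.e. when the discriminant $p^2-4q\,=\,\zeta^2(r^2-4)$ vanishes, i.e. when $a\,=\,b$. Hence $\partial\,\mathrm{Sym}^2(\mathbb{RP}^1)$ is exactly the image of the diagonal embedding $S^1\,\hookrightarrow\,\mathrm{Sym}^2(\mathbb{RP}^1)$, $a\,\mapsto\,\{a,a\}$.

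I expect the only real obstacle to be the bookkeeping in the first two steps: one must check that the choices of $\zeta$ and $r$ assemble into a genuine point of the annulus-modulo-$\sim$ (i.e. of the M\"obius band) and not of the annulus itself — the same computation done carelessly would wrongly give an annulus. As a sanity check, or as an alternative proof, one can instead note that $\mathrm{Sym}^2(\mathbb{RP}^1)$ is the quotient of the torus $S^1\times S^1$ by the factor-exchanging involution, whose fixed locus is the single diagonal circle along which it acts as a reflection; then $\mathrm{Sym}^2(\mathbb{RP}^1)$ is a compact surface whose boundary is the image of that circle and whose Euler characteristic is $\tfrac12\bigl(\chi(S^1\times S^1)+\chi(S^1)\bigr)\,=\,0$, so by the classification of compact surfaces it is a M\"obius band.
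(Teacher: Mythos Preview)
Your argument is correct. Both the explicit parametrization via $(\zeta,r)$ and the alternative Euler-characteristic argument work, and your bookkeeping on the sign ambiguity $(\zeta,r)\sim(-\zeta,-r)$ is exactly what is needed to land in the M\"obius band rather than the annulus.

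The paper takes a different route: it views the fixed locus of complex conjugation on $\mathrm{Sym}^2(\mathbb{CP}^1)$ as the space of real quadratic polynomials up to scalar, i.e.\ as $\mathbb{RP}^2$, and observes that the discriminant conic cuts $\mathbb{RP}^2$ into a disc (the pairs $\{z,\bar z\}$ with $z\notin\mathbb{RP}^1$) and a M\"obius band; $\mathrm{Sym}^2(\mathbb{RP}^1)$ is precisely this M\"obius band, with boundary the discriminant locus, i.e.\ the diagonal. So the paper exploits the ambient complex structure and the classical picture of a conic in $\mathbb{RP}^2$, whereas you work intrinsically on $S^1$ and produce explicit coordinates. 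Your approach is more hands-on and gives a concrete model that could be useful for later gluing arguments; the paper's approach is shorter and ties the result to the real-algebraic viewpoint (real quadratics, discriminant) that pervades the rest of the article. Interestingly, your $(p,q)$ are essentially the affine coordinates on the paper's $\mathbb{RP}^2$, and your condition $r^2=4$ is exactly the discriminant conic $p^2-4q=0$, so the two proofs are closer in spirit than they first appear.
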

\begin{proof}
Consider the subspace of 
$\mathrm{Sym}^2(\mathbb{CP}^{1})$ which is invariant under the complex conjugation.
The complement, in it, of the subspace of distinct conjugate pairs
is identified with $\mathrm{Sym}^2(\mathbb{RP}^1)$. The subspace of 
$\mathrm{Sym}^2(\mathbb{CP}^{1})$ which is invariant under the complex conjugation 
can be realized as the space of roots of real polynomials, and therefore can be 
identified as the space of real quadratic polynomials, up to scalar multiplication 
and therefore is $\mathbb{RP}^{2}$. The discriminant is a conic in this copy of 
$\mathbb{RP}^2$ and divides it into a disc and a M\"obius band. The subspace of 
conjugate pairs is naturally isomorphic to the component of $\mathbb{CP}^{1}\setminus 
\mathbb{RP}^{1}$, which is a disc. Therefore, $\mathrm{Sym}^2(\mathbb{RP}^{1})$ is 
the projective plane minus a disc which is a M\"obius band. Note that the boundary 
consists of the discriminant conic which corresponds to the space of repeated roots, 
which is the diagonal of $\mathrm{Sym}^2(\mathbb{RP}^1)$.
\end{proof}

\begin{proposition}\label{pla}
If $(X\, ,\sigma)$ is an M-curve, then $(\mathrm{Sym}^2(X)\, , \sigma^2)$ is an M-variety.
\end{proposition}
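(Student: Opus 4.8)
The plan is to compute the sum of the mod $2$ Betti numbers of the real locus $\mathrm{Sym}^2(X)^{\sigma^2}$ and show it equals $3+3g+2g^2$, the total mod $2$ Betti number of $\mathrm{Sym}^2(X)$ given by Lemma~\ref{lem1}(1); by the Smith--Thom inequality~\eqref{smith} this is exactly the assertion that $(\mathrm{Sym}^2(X),\sigma^2)$ is an M-variety. So the real work is a topological identification of $\mathrm{Sym}^2(X)^{\sigma^2}$.

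**Identifying the real locus.**

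First I would describe $\mathrm{Sym}^2(X)^{\sigma^2}$ as a set: an unordered pair $\{x_1,x_2\}$ is $\sigma^2$-fixed iff either both $x_i$ lie on $X^\sigma$, or $x_2=\sigma(x_1)$ with $x_1\notin X^\sigma$. Thus the real locus is the union of two pieces glued along the part of the diagonal lying over $X^\sigma$: the piece $P_1=\mathrm{Sym}^2(X^\sigma)$ coming from pairs of real points, and the piece $P_2$ coming from conjugate pairs, which is the quotient of one of the two halves of $X\setminus X^\sigma$ by nothing (it maps bijectively, away from $X^\sigma$, to an open half) — more precisely $P_2$ is a copy of the closure of one connected component... but here is the subtlety: $X\setminus X^\sigma$ need not be connected. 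Since $(X,\sigma)$ is an M-curve, $X^\sigma$ has $g+1$ components and $X\setminus X^\sigma$ is disconnected with two components $X^+,X^-$ interchanged by $\sigma$ (the ``dividing'' case — every M-curve is dividing, which I should recall or cite). Hence $P_2$ is homeomorphic to $\overline{X^+}$, a compact surface with boundary $X^\sigma$ ($g+1$ circles); since $X^+$ is one half of a genus-$g$ dividing curve it is a sphere with $g+1$ disks removed, so $P_2$ is connected with $\sum \dim H_i(P_2) = 1 + (g+1) + 0 = g+2$ — wait, I must be careful, $H_1$ of a planar surface with $g+1$ boundary circles has rank $g$, and $H_2=0$ since it has boundary, so $\sum\dim H_i(P_2)=1+g$. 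Meanwhile $P_1=\mathrm{Sym}^2(X^\sigma)$ where $X^\sigma=\coprod_{i=1}^{g+1}C_i$, $C_i\cong S^1$; so $\mathrm{Sym}^2(X^\sigma)=\coprod_i \mathrm{Sym}^2(C_i)\ \sqcup\ \coprod_{i<j} C_i\times C_j$, with $\mathrm{Sym}^2(S^1)$ a Möbius band (the earlier lemma), contributing $\sum\dim H_i = 1+1 = 2$ each, and each torus $C_i\times C_j$ contributing $1+2+1=4$.

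**Assembling via Mayer--Vietoris.**

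Next I would compute $\sum\dim H_i(\mathrm{Sym}^2(X)^{\sigma^2})$ by a Mayer--Vietoris argument for $P_1\cup P_2$, whose intersection $P_1\cap P_2$ is the ``real diagonal'' $\{\{x,x\}:x\in X^\sigma\}\cong X^\sigma$, a disjoint union of $g+1$ circles, with $\sum\dim H_i = 2(g+1)$. Here I must identify the gluing correctly: inside $P_1$, the diagonal of each $\mathrm{Sym}^2(C_i)$ is the \emph{boundary circle} of that Möbius band (by the boundary lemma), while inside $P_2\cong\overline{X^+}$ the real diagonal is the full boundary $X^\sigma$. So $\mathrm{Sym}^2(X)^{\sigma^2}$ is obtained by taking $\overline{X^+}$ (planar, $g+1$ boundary circles), gluing a Möbius band along each boundary circle (capping each boundary with a cross-cap, converting the planar surface into a closed non-orientable surface of genus $g+1$, i.e. connected sum of $g+1$ projective planes), and then additionally disjoint-union-ing the $\binom{g+1}{2}$ tori $C_i\times C_j$. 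A closed non-orientable surface $N_{g+1}$ has $\sum\dim H_i(N_{g+1};\mathbb{Z}/2)=1+(g+1)+1=g+3$. Adding the tori: $g+3 + 4\binom{g+1}{2} = g+3+2g(g+1)=2g^2+3g+3$, matching~\eqref{sym2} exactly.

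**The main obstacle.**

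The main obstacle I anticipate is the gluing identification — specifically verifying (i) that every M-curve is dividing so that $P_2$ is a single planar half-surface with exactly $g+1$ boundary circles, and (ii) that the diagonal of $\mathrm{Sym}^2(C_i)$ really is the boundary of the Möbius band and that attaching it to $\overline{X^+}$ is a cross-cap attachment, so that the result is precisely $N_{g+1}$. Everything else (the Betti-number bookkeeping, Mayer--Vietoris, and the final arithmetic check against Lemma~\ref{lem1}) is routine once the homeomorphism type of $\mathrm{Sym}^2(X)^{\sigma^2}$ is pinned down. If a fully explicit homeomorphism is awkward, an alternative is to run Mayer--Vietoris purely on Betti numbers: $\sum\dim H_i(P_1\cup P_2)\ge \sum\dim H_i(P_1)+\sum\dim H_i(P_2)-\sum\dim H_i(P_1\cap P_2)$ with the deficiency controlled by the connecting maps, then pin down the connecting maps using that the boundary circle of a Möbius band is nonzero in $H_1$ of neither side is automatic — so the cleaner route is genuinely the explicit surface identification above, and I would present it that way.
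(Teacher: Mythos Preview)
Your approach is essentially the paper's: decompose $\mathrm{Sym}^2(X)^{\sigma^2}$ into the $\binom{g+1}{2}$ tori $C_i\times C_j$ together with the single closed surface $Y$ obtained by gluing a M\"obius band to each of the $g+1$ boundary circles of $\overline{X^+}$, then match the mod-$2$ Betti sum against Lemma~\ref{lem1}(1). The paper sidesteps your obstacle~(ii) by never identifying $Y\cong N_{g+1}$ explicitly---since $Y$ is a closed connected surface one has $\sum_i\dim H_i(Y;\mathbb{Z}/2)=4-\chi(Y)=4-\chi(X)/2=g+3$ directly---and handles obstacle~(i) with the Euler-characteristic observation that a connected $X\setminus X^\sigma$ would cap off to a closed surface with $\chi=4$.
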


\begin{proof}
Let $X^\sigma\, \subset\, X$ be the subset fixed by $\sigma$.
Assume that $(X\, , \sigma)$ is an M-curve, so $X^\sigma$ is a disjoint union
$\sqcup_{i=1}^{g+1} C_i$ of $g+1$ circles. Note that the Euler characteristic of $\chi (X\setminus X^{\sigma}) = \chi(X) = 2-2g$ because $X^\sigma$ is the union of the
circles $C_i$. Suppose $X\setminus X^\sigma$ is connected, note that its closure has $2(g+1)$ boundary components. After gluing a disc to each boundary component of $X\setminus X^\sigma$, one obtains a connected surface whose Euler characteristic is $2-2g + 2(1+g)=4$ which is impossible for a closed surface. Therefore, 
the complement $X\setminus X^{\sigma}$ has two connected components $X_1$ and $X_2$ that
are interchanged by $\sigma$, and $X^{\sigma}$ is the boundary of both $\overline{X_1}$
and $\overline{X_2}$. 

The real part of $\mathrm{Sym}^2(X)$ is the disjoint union of the following:
  \begin{enumerate}
\item $\overline{\{(z\, , \overline{z})\,\mid\, z\in X_1\}}\cup \{(\lambda\, ,\mu)\,
\mid\, \lambda\, , \mu\,\in\, C_i\}$, glued along $\{(\lambda\, ,\lambda)\,\mid\, \lambda\in C_i\}$. Using the previous lemma, this is
homeomorphic to the space $X_1$ with a M\"obius band attached along each component of the
boundary. Note that $\mathrm{Sym}^2(C_i)$ is a M\"obius band because $C_i$ is homeomorphic to $\mathbb{RP}^1$. 
\item $\{(\lambda\, ,\mu) \,\mid\, \lambda \in C_i, \mu \in C_j, i\neq j\}$;
it is a disjoint union of ${g+1}\choose{2}$ tori $S^1\times S^1$.
\end{enumerate}

Denote by $Y$ the space $\overline{X_1}$ with a M\"obius band glued at each
component of  its boundary. The fixed point locus $\mathrm{Sym}^2(X)^{\sigma^2}$ is
isomorphic to the disjoint union of $Y$ 
and ${g+1}\choose{2}$ tori. The sum of the Betti numbers of ${g+1}\choose{2}$ tori is $4\cdot
{{g+1}\choose{2}}\,=\,2g(g+1)$.

Note that the $0$-th and $2$-nd Betti numbers of $Y$ are 1 since we are considering
homology with $\mathbb{Z}/2\mathbb{Z}$ coefficients. Therefore, the sum of Betti numbers
of $Y$ is $4-\chi(Y)$. The Euler characteristic $\chi(Y)\,=\,\chi(X)/2\,=\,1-g$ because
$X^\sigma$, being a union of circles, has Euler characteristic 0 and the Euler
characteristic of the two halves $X_1$ and $X_2$ are the same; the Euler characteristic of a
M\"obius band is 0. Therefore, $4-\chi(Y)\,=\,4-(1-g)\,=\,3+g$. Therefore, the
sum of the Betti numbers of $\mathrm{Sym}^2(X)^{\sigma^2}$ is $2g(g+1)+3+g\,=\,3+3g+2g^2$.
Now the proposition follows from Lemma \ref{lem1}(1).
\end{proof}

In the following theorem, we treat $S^1$ as the subspace of complex numbers with unit 
norm. It has a group structure under complex multiplication.

\begin{theorem}
  \label{morton}
\begin{enumerate}
  \item $\theta : \mathrm{Sym}^3(S^1) \,\longrightarrow\, S^1$, where $\theta(\lambda_1, \lambda_2, \lambda_3)= \lambda_1\lambda_2\lambda_3$, is a fiber bundle over $S^1$ with fiber homeomorphic to a two simplex.
  \item  The fiber $\theta^{-1}(\mu)$ has boundary $\{(\lambda_1, \lambda_1, \lambda_2) \ |\ \lambda_1, \lambda_2 \in S^1,\  \lambda_1\lambda_1\lambda_2 = \mu\} \cong S^1 $. The boundary of  $\mathrm{Sym}^3(S^1)$ is the space $\{(\lambda_1, \lambda_1, \lambda_2) \ |\ \lambda_1, \lambda_2 \in S^1\}\cong S^1 \times S^1$.
\end{enumerate}
\end{theorem}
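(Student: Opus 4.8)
The plan is to describe $\mathrm{Sym}^3(S^1)$ very concretely in terms of angular coordinates and then exhibit $\theta$ as a trivial-up-to-rotation fibration. Write $\lambda_j = e^{2\pi i x_j}$ with $x_j \in \mathbb{R}/\mathbb{Z}$, so that $\mathrm{Sym}^3(S^1)$ is the quotient of $(\mathbb{R}/\mathbb{Z})^3$ by the symmetric group $S_3$, and $\theta$ is induced by $(x_1,x_2,x_3) \mapsto x_1+x_2+x_3 \pmod 1$. First I would pass to the universal cover: lift to ordered triples $(x_1,x_2,x_3) \in \mathbb{R}^3$ and use the slice $x_1 \le x_2 \le x_3 \le x_1 + 1$ (a fundamental domain for the combined action of $S_3$ and the diagonal $\mathbb{Z}$-translation) to parametrize $\mathrm{Sym}^3(S^1)$. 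In these coordinates the fiber over a fixed value $s = x_1 + x_2 + x_3 \pmod 1$ is cut out by one affine equation inside this simplicial slice, so it is a $2$-dimensional convex polytope; I would check directly that for each $s$ it is in fact a triangle (a $2$-simplex), the degenerate cases where it might collapse being excluded because $s$ ranges over all of $S^1$ and the total space is $3$-dimensional.

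Next I would establish local triviality. The map $s \mapsto s/3$ (a local section of the $3$-fold cover $S^1 \to S^1$, $u \mapsto 3u$) lets one translate: the map $(\lambda_1,\lambda_2,\lambda_3) \mapsto (\zeta^{-1}\lambda_1, \zeta^{-1}\lambda_2, \zeta^{-1}\lambda_3)$, where $\zeta$ is a chosen cube root of $\mu$ varying continuously over a small arc, carries $\theta^{-1}(\mu)$ homeomorphically onto $\theta^{-1}(1) = \{(\lambda_1,\lambda_2,\lambda_3) : \lambda_1\lambda_2\lambda_3 = 1\}$. This gives a local trivialization over each arc of $S^1$, hence $\theta$ is a fiber bundle; the monodromy around $S^1$ is the cyclic rotation by $2\pi/3$ of the fiber, which is an orientation-preserving symmetry of the triangle $\theta^{-1}(1)$ and in particular a homeomorphism, so there is no obstruction to the bundle being well-defined (this proves part (1)).

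For part (2), I would identify the boundary. The boundary of the simplex $x_1 \le x_2 \le x_3 \le x_1+1$ is the locus where two of the (cyclically ordered) coordinates coincide, i.e.\ where at least two of the $\lambda_j$ are equal; intersecting with the affine slice $\{x_1+x_2+x_3 = s\}$ this is precisely $\{(\lambda_1,\lambda_1,\lambda_2) : \lambda_1^2\lambda_2 = \mu\}$, which is the graph of $\lambda_1 \mapsto \lambda_1^{-2}\mu$ and hence a single circle; taking the union over all $\mu \in S^1$ gives $\{(\lambda_1,\lambda_1,\lambda_2) : \lambda_1,\lambda_2 \in S^1\}$, parametrized homeomorphically by $(\lambda_1,\lambda_2) \in S^1 \times S^1$, which is the claimed torus.

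The one point needing genuine care — and the main obstacle — is verifying that the polytope slice is a \emph{full} $2$-simplex for every $s$ rather than a quadrilateral or a lower-dimensional set: one must check that exactly three of the facet inequalities $x_1 \le x_2$, $x_2 \le x_3$, $x_3 \le x_1+1$ are active in a generic way and that the slicing hyperplane meets the tetrahedral fundamental domain in a triangular cross-section uniformly in $s$. I would handle this by writing down the three vertices of the slice explicitly as functions of $s$ (they are the points where the plane $x_1+x_2+x_3=s$ meets the three edges of the fundamental tetrahedron) and checking they are affinely independent for all $s$; continuity in $s$ and the $S_3$/translation symmetry then finish the argument. Everything else is a routine, if slightly fiddly, bookkeeping of the fundamental-domain combinatorics.
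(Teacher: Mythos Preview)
Your proposal is correct and follows essentially the same route as the paper: both lift unordered triples to ordered real triples $s_1\le s_2\le s_3\le s_1+1$, use multiplication by a local cube root of $\mu$ for local triviality, and identify the fibre with the standard $2$-simplex via the successive-difference coordinates $(d_1,d_2)=(s_2-s_1,\,s_3-s_2)$, with the boundary corresponding to $d_1=0$, $d_2=0$, or $d_1+d_2=1$. The point you flag as needing ``genuine care'' is exactly what the paper spells out: the ordered lifts of a given triple form a single orbit under $T(s_1,s_2,s_3)=(s_2,s_3,s_1+1)$, and since $T$ shifts the sum by $1$ there is a unique lift with $s_1+s_2+s_3=0$, which furnishes the explicit inverse to your parametrisation and confirms the slice is a full $2$-simplex (your description of the slice as a fundamental domain for ``$S_3$ and diagonal $\mathbb{Z}$'' is slightly off---it is really a fundamental domain modulo the infinite cyclic group $\langle T\rangle$---but your monodromy remark shows you have the right picture).
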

The proof of (1) is a special case of the proof in \cite{morton}, and is based
on the version of the proof described
in \cite{St}. We recall the proof in order to describe the boundary.

\begin{proof}
Any point $\mu$ of $S^1$ is contained in a locally trivial neighborhood: For the 
neighborhood $U:= \{\mu e^{2 \pi\sqrt{-1} s} \ |\ s \in (-\frac{1}{2}, \frac{1}{2}) \}\ni \mu$, the map $\psi: 
U \times \theta^{-1}(\mu) \,\longrightarrow\, \theta^{-1}(U)$ given by $\psi(\mu e^{2\pi\sqrt{-1} s}, 
(\lambda_1, \lambda_2, \lambda_3))= (\lambda_1e^{2\pi \sqrt{-1} s/3}, \lambda_2e^{2\pi\sqrt{-1} 
s/3}, \lambda_3e^{2\pi\sqrt{-1} s/3})$ is clearly a local trivialization.

  We will now show that the fiber $\theta^{-1}(1)$ is homeomorphic to  the standard 2-simplex
$$\Delta^2\,:=\,\{(d_1, d_2)\ |\ d_i \in [0, 1], d_1+d_2 \leq 1\}$$ via the map
$$t\,:\,\Delta^2 \,\longrightarrow\, \theta^{-1}(1) \,\subset\, \mathrm{Sym}^3(S^1)$$ defined by $t(d_1, d_2) := (\lambda, \lambda e^{2\pi\sqrt{-1} d_1}, \lambda e^{2\pi\sqrt{-1} (d_1+d_2)})$, where $\lambda := e^{-2\pi\sqrt{-1}(\frac{2d_1+d_2}{3})}\in S^1$ (note that $\lambda$ was chosen so that $\lambda \lambda e^{2\pi\sqrt{-1} d_1} \lambda e^{2\pi\sqrt{-1} (d_1+d_2)}=1$, to ensure that the image of $t$ lies in $\theta^{-1}(1)$). Explicitly, $t(d_1, d_2) := (e^{-2\pi\sqrt{-1}(\frac{2d_1+d_2}{3})}, e^{2\pi\sqrt{-1} (d_1-\frac{2d_1+d_2}{3})}, e^{2\pi\sqrt{-1} (d_1+d_2-\frac{2d_1+d_2}{3})})$; observe that the powers of $e$ sum up to 0.
  
  Given $(\lambda_1, \lambda_2, \lambda_3) \in \mathrm{Sym}^3(S^1)$, while there are several triples $(s_1, s_2, s_3) \in \mathbb{R}^3$, such that $s_1\leq s_2 \leq s_3 \leq s_1+1$, $e^{2\pi\sqrt{-1} s_1}=\lambda_i$, $e^{2\pi\sqrt{-1} s_2}=\lambda_j$, and $e^{2\pi\sqrt{-1} s_3}=\lambda_k$ where $i \neq j\neq k$, only one triple also satisfies $s_1+s_2+s_3=0$, which is a necessary condition to define the inverse, owing to the last sentence in the previous paragraph. We will prove that this condition is also sufficient to ensure that the map $t'((\lambda_1, \lambda_2, \lambda_3)) = (s_2-s_1, s_3-s_2)$ is the inverse of $t$.

  $\mathbb{R}$ covers the circle via the map $s \,\longmapsto\, e^{2\pi\sqrt{-1} s}$ so we can pick $(s_1, s_2, s_3) \in \mathbb{R}^3$, such that $s_1\leq s_2 \leq s_3 \leq s_1+1$, $e^{2\pi\sqrt{-1} s_1}=\lambda_1$, $e^{2\pi\sqrt{-1} s_2}=\lambda_2$, and $e^{2\pi\sqrt{-1} s_3}=\lambda_3$ by lifting along the path that begins at $\lambda_1$ and wraps around the circle once.  Since $\lambda_1\lambda_2\lambda_3 = 1$, $s_1+s_2+s_3 \in \mathbb{Z}$. Define $T(s_1, s_2, s_3) = (s_2, s_3, s_1+1)$, then $(s_2', s_3', s_1')=T(s_1, s_2, s_3)$ also satisfies $s_2'\leq s_3' \leq s_1' \leq s_1'+1$, $e^{2\pi\sqrt{-1} s_1'}=\lambda_1$, $e^{2\pi\sqrt{-1} s_2'}=\lambda_2$, and $e^{2\pi\sqrt{-1} s_3'}=\lambda_3$, however $s_1'+s_2'+s_3'=(s_1+s_2+s_3)+1$. 
  
  $\{T^n(s_1, s_2, s_3)\ |\ n \in \mathbb{Z}\}$ consists of all possible triples 
$(s_i', s_j', s_k')$ that satisfy $s_i'\leq s_j' \leq s_k' \leq s_i'+1$, $e^{2\pi\sqrt{-1}
s_i'}=\lambda_i$, $e^{2\pi\sqrt{-1} s_j'}=\lambda_j$, and $e^{2\pi\sqrt{-1} s_k'}=\lambda_k$, $i 
\neq j \neq k\neq i$, but the sums $s_i' + s_j' + s_k'$ are all distinct
and they span all 
of $\mathbb{Z}$. Therefore there exists a unique $(s_1', s_2', s_3') \in \{T^n(s_1, 
s_2, s_3)\ |\ n \in \mathbb{Z}\}$ that sum to $0$ and the map $t'((\lambda_1, 
\lambda_2, \lambda_3)) = (s_2'-s_1', s_3'-s_2')$ is well-defined.
  
  It is easy to check that $t'$ is an inverse of $t$. Indeed, $t \circ t'(\lambda_1, \lambda_2, \lambda_3)= t(s_2'-s_1', s_3'-s_2') = (e^{2\pi\sqrt{-1} s_1'}, e^{2\pi\sqrt{-1} s_2'}, e^{2\pi\sqrt{-1} s_3'})$, where the last equality also follows from the fact that $s_1'+s_2'+s_3'=0$. On the other hand, since $s_1'=-\frac{2d_1+d_2}{3}$, $s_2'=d_1-\frac{2d_1+d_2}{3}$, and $s_3'=d_1+d_2-\frac{2d_1+d_2}{3}$ already sums to 0, $t'(e^{-2\pi\sqrt{-1}(\frac{2d_1+d_2}{3})}, e^{2\pi\sqrt{-1} (d_1-\frac{2d_1+d_2}{3})}, e^{2\pi\sqrt{-1} (d_1+d_2-\frac{2d_1+d_2}{3})}) = (d_1, d_2)$ and therefore, $t' \circ t = \mathrm{Id}$.

  For (2), it follows from the definition of $t$ that given $(\lambda_1, \lambda_2, \lambda_3) = t(d_1, d_2)$, $\lambda_i = \lambda_j$ if and only if $d_1=0$, $d_2=0$, or $d_1+d_2=1$.  This boundary $\{(\lambda_1, \lambda_1, \lambda_2) \ |\ \lambda_1, \lambda_2 \in S^1\}\subset \mathrm{Sym}^3(S^1)$ is homeomorphic to $S^1 \times S^1$ via
the homeomorphism given by $(\lambda_1, \lambda_1, \lambda_2) \,\longmapsto\, (\lambda_1, \lambda_2)$, where the $\lambda_1$ is the repeated element. 
\end{proof}

\begin{lemma}
  \label{components}
Let $X_1$ be one half of $X\setminus X^{\sigma}$. 
Then $\mathrm{Sym}^3(X)^{\sigma^3}$ is the disjoint union of ${g+1}\choose{3}$ copies of the set $S^1 \times S^1 \times S^1$ and the connected sets $B_i$, $i=1 \ldots g+1$, where each $B_i$ is constructed as follows: $S^1 \times X_1$ has $g+1$ disjoint copies of $S^1 \times S^1$ in the boundary; to the $i$th boundary, glue $\mathrm{Sym}^3(S^1)$ (which also has boundary $S^1 \times S^1$) and to each of the other $g$ boundaries glue $S^1 \times M$ (which also has $S^1 \times S^1$ as its boundary) where $M$ is a M\"obius band (the precise gluings will be described in the proof). Furthermore, $\mathrm{H}_1(B_i;\mathbb{Z}/2\mathbb{Z}) \cong H_1(S^1 \times X_1;\mathbb{Z}/2\mathbb{Z})$ for each $i$.
\end{lemma}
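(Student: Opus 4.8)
The plan is to describe the fixed locus $\mathrm{Sym}^3(X)^{\sigma^3}$ explicitly, read off the pieces $B_i$, and then compute $H_1(B_i;\mathbb{Z}/2\mathbb{Z})$ by a Mayer--Vietoris argument whose only nonformal ingredient is the behaviour of the boundary torus of $\mathrm{Sym}^3(S^1)$ under the bundle map $\theta$ of Theorem \ref{morton}.

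A triple is fixed by $\sigma^3$ exactly when the multiset $\{x_1,x_2,x_3\}$ is $\sigma$-invariant, which (as $\sigma$ preserves multiplicities) means either all three points lie on $X^\sigma=\sqcup_{i=1}^{g+1}C_i$, or one ``distinguished'' point lies on some $C_i$ while the other two form a conjugate pair $\{z,\sigma(z)\}$ with $z\in X_1$. As $z\to w\in C_j$ this pair degenerates to the double point $\{w,w\}$, so the closure of $\{(\mu,z,\sigma(z))\mid\mu\in C_i,\ z\in X_1\}$ is $C_i\times\overline{X_1}$, where $\overline{X_1}$ is the compact surface with boundary circles $C_1,\dots,C_{g+1}$ (both halves of $X\setminus X^\sigma$ being bounded by all of $X^\sigma$, as in the proof of Proposition \ref{pla}). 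Sorting the $\sigma^3$-fixed triples by which circles they meet and which point is distinguished, a short case check shows each lies in exactly one of: the torus $C_i\times C_j\times C_k$ of pairwise distinct triples, $i<j<k$ (there are $\binom{g+1}{3}$ of them), or, for each $i$,
$$B_i\ =\ (C_i\times\overline{X_1})\ \cup\ \mathrm{Sym}^3(C_i)\ \cup\ \bigcup_{j\neq i}\bigl(C_i\times\mathrm{Sym}^2(C_j)\bigr),$$
where $\mathrm{Sym}^3(C_i)$ is glued to the $i$-th boundary torus $C_i\times C_i$ of $C_i\times\overline{X_1}$ along its boundary $\partial\,\mathrm{Sym}^3(C_i)\cong C_i\times C_i$ (Theorem \ref{morton}(2)), and each $C_i\times\mathrm{Sym}^2(C_j)\cong S^1\times(\text{M\"obius band})$ (recall $\mathrm{Sym}^2(C_j)\cong\mathrm{Sym}^2(\mathbb{RP}^1)$) is glued to the $j$-th boundary torus $C_i\times C_j$ along $C_i\times\partial\,\mathrm{Sym}^2(C_j)$; every identification is the tautological one from $(\mu,z)\leftrightarrow\{\mu,z,z\}$ with $\mu$ distinguished and $z$ repeated. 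This is precisely the description in the statement, with outer circle $C_i$ and M\"obius bands $\mathrm{Sym}^2(C_j)$, $j\neq i$. That the listed sets are the connected components is then formal: each is compact hence closed, there are finitely many and they partition $\mathrm{Sym}^3(X)^{\sigma^3}$ (so each is open too), and each $B_i$ is connected, its constituent pieces being connected and glued along nonempty loci.

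To compute $H_1(B_i;\mathbb{Z}/2\mathbb{Z})$ I would regroup $B_i=(C_i\times Z_i)\cup_{C_i\times C_i}\mathrm{Sym}^3(C_i)$, where $Z_i$ is $\overline{X_1}$ with a M\"obius band attached to every boundary circle except $C_i$; then $Z_i$ is a compact connected surface with single boundary circle $C_i$ and $\chi(Z_i)=\chi(\overline{X_1})=\chi(X)/2=1-g$, so it is homotopy equivalent to a wedge of $g$ circles and $H_1(Z_i;\mathbb{Z}/2\mathbb{Z})\cong(\mathbb{Z}/2\mathbb{Z})^g$. Two facts drive everything: (a) $[C_i]=\sum_{j\neq i}[C_j]=0$ in $H_1(Z_i;\mathbb{Z}/2\mathbb{Z})$, since the boundary classes of $\overline{X_1}$ sum to zero and each $[C_j]$, $j\neq i$, bounds a M\"obius band, hence is twice a core class, hence $0$ mod $2$; and (b) by Theorem \ref{morton}, $\mathrm{Sym}^3(C_i)\simeq S^1$, and under the identification in Theorem \ref{morton}(2) the composite $C_i\times C_i\hookrightarrow\mathrm{Sym}^3(C_i)\xrightarrow{\theta}S^1$ is $(\lambda_1,\lambda_2)\mapsto\lambda_1^2\lambda_2$, where $\lambda_1$ is the repeated root, of degree $2$ in $\lambda_1$ and degree $1$ in $\lambda_2$. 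A Mayer--Vietoris argument with $A=C_i\times Z_i$, $B=\mathrm{Sym}^3(C_i)$ and $A\cap B$ a collar of $C_i\times C_i$ ($\simeq T^2$) then gives $H_1(B_i)\cong\mathrm{coker}\bigl(\alpha\colon H_1(T^2)\to H_1(A)\oplus H_1(B)\bigr)$ (as $\widetilde H_0(T^2)=0$); and $\alpha$ sends the ``distinguished point varies'' generator to $([C_i]\otimes1,\ \text{a generator of }H_1(S^1))$, nonzero in both summands, and the ``repeated point varies'' generator to $0$ in $H_1(A)$ by (a) and to $0$ in $H_1(B)=H_1(S^1)$ by the degree-$2$ statement in (b). Hence $\mathrm{im}\,\alpha$ is one-dimensional and
$$\dim_{\mathbb{Z}/2\mathbb{Z}}H_1(B_i;\mathbb{Z}/2\mathbb{Z})\ =\ \dim H_1(C_i\times Z_i;\mathbb{Z}/2\mathbb{Z})+1-1\ =\ (g+1)+1-1\ =\ g+1\,,$$
which is exactly $\dim_{\mathbb{Z}/2\mathbb{Z}}H_1(S^1\times X_1;\mathbb{Z}/2\mathbb{Z})=1+g$; as both are $\mathbb{Z}/2\mathbb{Z}$-vector spaces, they are isomorphic.

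I expect the main obstacle to be fact (b): correctly determining the gluing of the boundary torus of $\mathrm{Sym}^3(S^1)$ and, in particular, that the circle along which the repeated root moves maps to the base $S^1$ of $\theta$ with even degree. This ``degree $2$'' is the homological shadow of the repeated root, and it is exactly what forces $\alpha$ to have rank $1$ rather than $2$; it cannot be recovered from an Euler characteristic count, because $B_i$ is a closed $3$-manifold, so Poincar\'e duality over $\mathbb{Z}/2\mathbb{Z}$ makes its alternating sum of Betti numbers vanish identically and leaves $\dim H_1(B_i)$ undetermined.
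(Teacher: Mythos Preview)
Your proof is correct and follows the paper's strategy: decompose the fixed locus according to which circles the points meet, and compute $H_1(B_i;\mathbb{Z}/2\mathbb{Z})$ by Mayer--Vietoris, the crux being the behaviour of the two generating circles on the boundary torus of $\mathrm{Sym}^3(C_i)$.

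The organizational difference is that the paper performs the gluings one at a time --- first attaching each $C_i\times\mathrm{Sym}^2(C_j)$ to $C_i\times\overline{X_1}$ via a separate Mayer--Vietoris argument, and only then attaching $\mathrm{Sym}^3(C_i)$ --- whereas you observe that all the M\"obius-band gluings are $C_i$-equivariant, so their union is simply the product $C_i\times Z_i$, and then a single Mayer--Vietoris step finishes. This is a genuine streamlining: it replaces $g$ repeated Mayer--Vietoris computations by one K\"unneth computation, and it makes transparent why the ``repeated point varies'' circle dies in $H_1(C_i\times Z_i)$ (your fact~(a)). For the map into $H_1(\mathrm{Sym}^3(C_i))$, the paper reads off the boundary behaviour by computing mod~$2$ intersection numbers with an explicit fibre and an explicit section of the bundle $\theta$, while you read off the degrees~$1$ and~$2$ directly from the product formula $\theta(\lambda_1,\lambda_1,\lambda_2)=\lambda_1^2\lambda_2$; the two computations are equivalent, yours being the more direct.
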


\begin{proof}
Denote the $i$th boundary component of $X_1$ by $C_i$. The space 
$\mathrm{Sym}^3(X)^{\sigma^3}$ consists of elements of the form $\{ (\lambda_1\, , 
\lambda_2\, , \lambda_3) \,\mid\, \lambda_i \in X^\sigma\}$ or $\{ (\lambda\, , z\, , 
\bar{z}) \,\mid\, \lambda \in X^\sigma, z \in X \setminus X^\sigma \}$. The latter is 
clearly homeomorphic to the disjoint union of $Y_0^i := \{ (\lambda\, , z\, , 
\bar{z}) \,\mid\, \lambda \in C_i, z \in X \setminus X^\sigma \} = C_i \times X_1 
\cong S^1 \times X_1$ (see Proposition \ref{pla}). The former consists of spaces of three forms:
\begin{enumerate}
    \item $\{ (\lambda_1\, , \lambda_2\, , \lambda_3) \,\mid\, \lambda_1 \in C_i, \lambda_2 \in C_j, \lambda_3 \in C_k, i \neq j \neq k\}$. This space is $C_i \times C_j \times C_k \cong S^1 \times S^1 \times S^1$. There are ${g+1}\choose{3}$ copies of it and they are clearly disjoint.
  \item $\{ (\lambda_1\, , \lambda_2\, , \lambda_3) \,\mid\, \lambda_1 \in C_i, \lambda_2, \lambda_3 \in C_j, i \neq j\}$. This space is $C_i \times \mathrm{Sym}^2(C_j) \cong S^1 \times M$, where $M$ is the M\"obius band. Note that the boundary of this space is $\{ (\lambda_1\, , \lambda_2\, , \lambda_2) \,\mid\, \lambda_1 \in C_i, \lambda_2 \in C_j, i \neq j\}$ and it lies in the closure and therefore the same
component of $C_i\times X_1$. 
  \item $\{ (\lambda_1\, , \lambda_2\, , \lambda_3) \,\mid\, \lambda_1, \lambda_2, \lambda_3 \in C_i\}$. This space is $\mathrm{Sym}^3(C_i) \cong \mathrm{Sym}^3(S^1)$. Its boundary is $\{ (\lambda_1\, , \lambda_2\, , \lambda_2) \,\mid\, \lambda_1, \lambda_2 \in C_i \}$, which is also in the closure  and therefore the same component of $C_i \times X_1$.
  \end{enumerate}
  We claim that $$\mathrm{H}_1(Y_0^i \cup C_i \times \mathrm{Sym}^2(C_j);
\mathbb{Z}/2\mathbb{Z}) \cong \mathrm{H}_1(\overline{Y_0^i};\mathbb{Z}/2\mathbb{Z})$$
for all $i\,\not=\, j$. Note that $\overline{Y_0^i} \cap C_i \times \mathrm{Sym}^2(C_j) = \{ (\lambda_1\, , \lambda_2\, , \lambda_2) \,\mid\, \lambda_1 \in C_i, \lambda_2 \in C_j, i \neq j\} \cong S^1 \times S^1$.  Consider the reduced homology version of the Mayer Vietoris sequence,
 \[ \ldots \,\longrightarrow\, \widetilde{\mathrm{H}}_1(S^1 \times S^1) \,\longrightarrow\,
\widetilde{\mathrm{H}}_1(\overline{Y_0^i}) \oplus
\widetilde{\mathrm{H}}_1(C_i \times \mathrm{Sym}^2(C_j))\,\longrightarrow\,
\widetilde{\mathrm{H}}_1(\overline{Y_0^i} \cup C_i \times \mathrm{Sym}^2(C_j))\,\longrightarrow\, 0\]
which simplifies to,
\[ \ldots \,\longrightarrow\, \mathbb{Z}/2\mathbb{Z} \oplus \mathbb{Z}/2\mathbb{Z} \xrightarrow{\theta}
\widetilde{\mathrm{H}}_1(S^1 \times X_1) \oplus (\mathbb{Z}/2\mathbb{Z} \oplus
\mathbb{Z}/2\mathbb{Z}) \,\longrightarrow\, \widetilde{\mathrm{H}}_1(Z) \,\longrightarrow\, 0\]
where $Z\,=\, \overline{Y_0^i} \cup C_i \times \mathrm{Sym}^2(C_j)$.

Let $[A_1]$ and $[A_2]$ be the generators of $\widetilde{\mathrm{H}}_1(S^1 \times S^1)= 
\mathbb{Z}/2\mathbb{Z} \oplus \mathbb{Z}/2\mathbb{Z}$, represented by the curves $A_1=\{ (\lambda_1\, , 
1\, , 1) \,\mid\, \lambda_1 \in C_i, 1 \in C_j\}$ and $A_2=\{ (1\, , 
\lambda_2\, , \lambda_2) \,\mid\, 1 \in C_i, \lambda_2 \in C_j\}$. Note 
that $A_1$ and $A_2$ are not boundaries in $S^1 \times X_1$ because $X_1$ has more 
than one boundary component, however, $A_2$ is twice a generator in $S^1 \times 
\mathrm{Sym}^2(C_j)\cong S^1\times M$, where $M$ is the M\"obius band, and is 
therefore $0$ (since the coefficient is $\mathbb{Z}/2\mathbb{Z}$). Therefore,

\[[A_1] \xrightarrow{\theta} ([A_1], [A_1])\]
\[[A_2] \xrightarrow{\theta} ([A_2], 0)\]

Therefore, the image of $\theta$ is $\mathbb{Z}/2\mathbb{Z} \oplus \mathbb{Z}/2\mathbb{Z}$, from which it follows that $\mathrm{H}_1(\overline{Y_0^i} \cup C_i \times \mathrm{Sym}^2(C_j);\mathbb{Z}/2\mathbb{Z}) \cong \mathrm{H}_1(\overline{Y_0^i};\mathbb{Z}/2\mathbb{Z})$. This proves the claim.

Following exactly the same proof, one can prove that taking the union of $Y_0^i$
with all spaces
  $C_i \times \mathrm{Sym}^2(C_j)$, $j\, \not=\, i$, does not change
the first homology. Denote the resulting space by $Y^i$. The boundary of $Y^i$ is
$C_i\times C_i$.

We now claim that
$$\mathrm{H}_1(\overline{Y^i} \cup \mathrm{Sym}^3(C_i);\mathbb{Z}/2\mathbb{Z})
\cong \mathrm{H}_1(\overline{Y^i};\mathbb{Z}/2\mathbb{Z})\, .$$ Note that $\overline{Y^i} \cap \mathrm{Sym}^3(C_i) = \{ (\lambda_1\, , \lambda_2\, , \lambda_2) \,\mid\, \lambda_1, \lambda_2 \in C_i\} \cong S^1 \times S^1$. Consider the Mayer Vietoris sequence,
\[ \ldots \,\longrightarrow\, \widetilde{\mathrm{H}}_1(S^1 \times S^1) \,\longrightarrow\, \widetilde{\mathrm{H}}_1(\overline{Y^i}) \oplus \widetilde{\mathrm{H}}_1(\mathrm{Sym}^3(C_i)) \,\longrightarrow\, \widetilde{\mathrm{H}}_1(\overline{Y^i} \cup \mathrm{Sym}^3(C_i)) \,\longrightarrow\, 0\]
which simplifies to
\[ \ldots \,\longrightarrow\, \mathbb{Z}/2\mathbb{Z} \oplus \mathbb{Z}/2\mathbb{Z} \xrightarrow{\theta} \widetilde{\mathrm{H}}_1(\overline{Y^i}) \oplus \mathbb{Z}/2\mathbb{Z} \,\longrightarrow\, \widetilde{\mathrm{H}}_1(\overline{Y^i} \cup D^2 \times S^1) \,\longrightarrow\, 0\]

Let $[A_1]$ and $[A_2]$ be the generators of $\widetilde{\mathrm{H}}_1(S^1 \times S^1)= \mathbb{Z}/2\mathbb{Z} \oplus \mathbb{Z}/2\mathbb{Z}$, represented by $A_1 := \{(\lambda, \lambda, 1)\ |\ \lambda \in S^1\}$ and $A_2 := \{(1, 1, \lambda) \ |\ \lambda \in S^1\}$. Let us consider the curves $A_1':=\{(\lambda, \lambda, \mu) \ |\ \lambda, \mu \in C_i,\ \lambda^2\mu=1\}$ and $A_2' := \{(1, 1, \lambda) \ |\ \lambda \in S^1\}$ in $\mathrm{Sym}^3(C_i)$. By theorem~\ref{morton}, $A_1'$ is the boundary of a fiber and therefore homologous to zero, whereas $A_2'$ is a section of the fiber bundle $\theta: \mathrm{Sym}^3(S^1) \,\longrightarrow\, S^1$, and therefore $[A_2']$ generates the first homology of $\mathrm{Sym}^3(C_i)$. Note that $[A_1']$ and $[A_2']$ are generators of the first homology of the boundary of $\mathrm{Sym}^3(C_i)$. Let the image of $A_1$ in the boundary of $\mathrm{Sym}^3(S^1)$ represent the homology class $a[A_1']+b[A_2']$. The coefficients can be determined by the mod 2 intersection number with $A_1'$ and $A_2'$: $A_1$ intersects $A_2'$ in one point, namely $(1,1,1)$, therefore $a=1$; $A_1$ intersects $A_1'$ in two points, namely $\{(\lambda, \lambda, 1)\ |\ \lambda^2=1\}= \{(1,1,1), (-1, -1, 1)\}$, therefore the intersection number mod 2 is 0 and $b=0$. Therefore, the image of $A_1$ in the boundary of $\mathrm{Sym}^3(S^1)$ represents the homology class $[A_1']$ which in the solid torus is 0. On the other hand, $A_2=A_2'$, by definition. Now the image of $A_1$ in $\overline{Y^i}$ is the boundary and therefore homologous to 0, whereas the image of $A_2$ in $\overline{Y^i}$ represents a generator of the first homology. Therefore,

\[[A_1] \xrightarrow{\theta} (0, 0)\]
\[[A_2] \xrightarrow{\theta} ([A_2], [A_2'])\]

Therefore, the image of $\theta$ is $\mathbb{Z}/2\mathbb{Z}$ and $\widetilde{\mathrm{H}}_1(\overline{Y^i}) \cong \widetilde{\mathrm{H}}_1(\overline{Y^i} \cup \mathrm{Sym}^3(C_i))$.
\end{proof}

\begin{lemma}
  The sum of the Betti numbers of each $B_i$ is $2(g+2)$.
\end{lemma}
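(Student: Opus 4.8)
The plan is to combine $\mathbb{Z}/2\mathbb{Z}$--Poincar\'e duality with the identification $\mathrm{H}_1(B_i;\mathbb{Z}/2\mathbb{Z})\cong \mathrm{H}_1(S^1\times X_1;\mathbb{Z}/2\mathbb{Z})$ supplied by Lemma \ref{components}. The first point to establish is that each $B_i$ is a closed connected $3$--manifold. One clean way to see this is structural: $\mathrm{Sym}^3(X)$ is a smooth complex projective threefold, so the fixed-point locus $\mathrm{Sym}^3(X)^{\sigma^3}$ of the anti-holomorphic involution $\sigma^3$ is a smooth compact real $3$--manifold without boundary, and $B_i$ is by definition one of its connected components. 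Alternatively, one can argue directly from the gluing description in Lemma \ref{components}: by Theorem \ref{morton}, $S^1\times X_1$, $\mathrm{Sym}^3(S^1)$, and the $g$ copies of $S^1\times M$ are compact $3$--manifolds whose boundaries are disjoint unions of tori, and $B_i$ is obtained by gluing them along these tori, producing a closed $3$--manifold.

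Granting that $B_i$ is a closed connected $3$--manifold, $\mathbb{Z}/2\mathbb{Z}$--Poincar\'e duality (which is valid in the non-orientable case as well, and $B_i$ is generally non-orientable because it contains $S^1\times M$) gives $\dim\mathrm{H}_0(B_i)=\dim\mathrm{H}_3(B_i)=1$ and $\dim\mathrm{H}_1(B_i)=\dim\mathrm{H}_2(B_i)$, with all homology in degrees $>3$ vanishing. Hence the sum of the Betti numbers of $B_i$ equals $2+2\dim\mathrm{H}_1(B_i;\mathbb{Z}/2\mathbb{Z})$, and the problem reduces to computing $\dim\mathrm{H}_1(B_i;\mathbb{Z}/2\mathbb{Z})$.

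By the final assertion of Lemma \ref{components}, $\mathrm{H}_1(B_i;\mathbb{Z}/2\mathbb{Z})\cong \mathrm{H}_1(S^1\times X_1;\mathbb{Z}/2\mathbb{Z})$, so I would next pin down the topology of $X_1$. It is a compact orientable surface with $g+1$ boundary circles (the $C_i$), and its Euler characteristic is $\chi(X)/2=1-g$, as computed in the proof of Proposition \ref{pla}. Since a genus-$h$ orientable surface with $g+1$ boundary circles has Euler characteristic $2-2h-(g+1)$, this forces $h=0$; thus $X_1$ is a planar surface, homotopy equivalent to a wedge of $g$ circles, whence $\dim\mathrm{H}_0(X_1;\mathbb{Z}/2\mathbb{Z})=1$ and $\dim\mathrm{H}_1(X_1;\mathbb{Z}/2\mathbb{Z})=g$. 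The K\"unneth formula then gives $\dim\mathrm{H}_1(S^1\times X_1;\mathbb{Z}/2\mathbb{Z})=\dim\mathrm{H}_1(S^1)+\dim\mathrm{H}_1(X_1)=1+g$. Combining with the previous paragraph, the sum of the Betti numbers of $B_i$ is $2+2(g+1)=2(g+2)$, as claimed.

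The only step that requires genuine care is the first one---verifying that $B_i$ is a closed manifold, since this is exactly what licenses the use of Poincar\'e duality to pass from $\mathrm{H}_1$ to $\mathrm{H}_2$; once that is in hand the rest of the argument is purely formal. If one preferred not to invoke the smoothness of $\mathrm{Sym}^3(X)$, the alternative would be to compute $\mathrm{H}_2(B_i)$ by hand through a Mayer--Vietoris computation mirroring the one used for $\mathrm{H}_1$ in Lemma \ref{components}, tracking how the second homology of $S^1\times X_1$, $\mathrm{Sym}^3(S^1)$, and the $S^1\times M$ pieces assemble across the gluing tori; this is more laborious, so the manifold argument is to be preferred.
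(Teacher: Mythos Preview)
Your proposal is correct and follows essentially the same approach as the paper: both compute $\dim\mathrm{H}_1(X_1)=g$ via the Euler characteristic, apply K\"unneth to get $\dim\mathrm{H}_1(S^1\times X_1)=g+1$, invoke Lemma~\ref{components} to identify this with $\dim\mathrm{H}_1(B_i)$, and then use $\mathbb{Z}/2\mathbb{Z}$--Poincar\'e duality on the closed $3$--manifold $B_i$ to obtain $\dim\mathrm{H}_2(B_i)=\dim\mathrm{H}_1(B_i)$ and hence the total $2(g+2)$. Your write-up is in fact more careful than the paper's in justifying that $B_i$ is a closed manifold and in identifying the homotopy type of $X_1$, but the logical structure is identical.
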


\begin{proof}
 We already know, by computing the Euler characteristic, that $\mathrm{H}_1(X_1;
\mathbb{Z}/2\mathbb{Z}) \,=\, (\mathbb{Z}/2\mathbb{Z})^g$. By the K\"unneth formula, we
have that $\mathrm{H}_1(S^1 \times X_1; \mathbb{Z}/2\mathbb{Z}) \,=\,
(\mathbb{Z}/2\mathbb{Z})^{g+1}$. By the previous lemma, $\mathrm{H}_1(B_i;\mathbb{Z}/2\mathbb{Z})
\,\cong \,\mathrm{H}_1(S^1 \times X_1;\mathbb{Z}/2\mathbb{Z})\,=\,
(\mathbb{Z}/2\mathbb{Z})^{g+1}$. Finally, by the Poincar\'e duality coupled with the
universal coefficient theorem
of cohomology, $\mathrm{H}_2(B_i;\mathbb{Z}/2\mathbb{Z}) \,=\,
\mathrm{H}_1(B_i;\mathbb{Z}/2\mathbb{Z})$. Since $B_i$ is a compact manifold without
boundary, the sum of the Betti numbers is $1 + (g+1) + (g+1) + 1 \,=\, 2(g+2)$
\end{proof}

\begin{proposition}
Let $(X\, ,\sigma)$ be an M-curve. Then $(\mathrm{Sym}^3(X)\, , \sigma^3)$ is an M-variety.
\end{proposition}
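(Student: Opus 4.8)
The plan is to assemble the pieces established in the preceding lemmas and verify that the total number of $\mathbb{Z}/2\mathbb{Z}$--Betti numbers of the real locus $\mathrm{Sym}^3(X)^{\sigma^3}$ coincides with the total number of $\mathbb{Z}/2\mathbb{Z}$--Betti numbers of $\mathrm{Sym}^3(X)$ computed in Lemma~\ref{lem1}(2). By the Smith--Thom inequality \eqref{smith}, this equality is precisely the condition that $(\mathrm{Sym}^3(X)\, ,\sigma^3)$ be an M-variety.

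First, Lemma~\ref{components} expresses $\mathrm{Sym}^3(X)^{\sigma^3}$ as a disjoint union of ${g+1}\choose{3}$ copies of $S^1\times S^1\times S^1$ together with the $g+1$ connected pieces $B_1\, ,\ldots\, ,B_{g+1}$. Since the homology of a disjoint union is the direct sum of the homologies of the components, the total Betti number of $\mathrm{Sym}^3(X)^{\sigma^3}$ is the sum of the contributions of these pieces. By the K\"unneth formula, $\sum_{i\geq 0}\dim \mathrm{H}_i(S^1\times S^1\times S^1;\, \mathbb{Z}/2\mathbb{Z})\,=\,8$, and by the previous lemma each $B_i$ contributes $2(g+2)$. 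Hence
\[
\sum_{i\geq 0}\dim \mathrm{H}_i(\mathrm{Sym}^3(X)^{\sigma^3};\, \mathbb{Z}/2\mathbb{Z})
\,=\, 8\binom{g+1}{3}\,+\,(g+1)\cdot 2(g+2)\, .
\]

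It remains to simplify the right-hand side. One has $8\binom{g+1}{3}\,=\,\frac{4}{3}g(g^2-1)\,=\,\frac{4g^3}{3}-\frac{4g}{3}$ and $2(g+1)(g+2)\,=\,2g^2+6g+4$, so the above sum equals $4+\frac{14g}{3}+2g^2+\frac{4g^3}{3}$, which is exactly the quantity appearing in Lemma~\ref{lem1}(2). Therefore the inequality \eqref{smith} is an equality for $(\mathrm{Sym}^3(X)\, ,\sigma^3)$, and so it is an M-variety.

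The substantive work here is the topological analysis already carried out in Theorem~\ref{morton} and Lemma~\ref{components} (identifying the connected components of $\mathrm{Sym}^3(X)^{\sigma^3}$ and showing that attaching the M\"obius-band pieces $S^1\times M$ and the piece $\mathrm{Sym}^3(S^1)$ does not change $\mathrm{H}_1$), together with the Betti-number count of the $B_i$; granting those, the present proposition is the arithmetic bookkeeping displayed above. The only point to keep in mind is that, in accordance with the standing convention, every homology group and Betti number throughout this argument is taken with $\mathbb{Z}/2\mathbb{Z}$ coefficients, so no universal-coefficient subtlety intervenes beyond what was already used to establish the cited lemmas.
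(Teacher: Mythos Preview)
Your proof is correct and follows essentially the same approach as the paper: invoke Lemma~\ref{components} to decompose the real locus, use the previous lemma for the Betti sum of each $B_i$, add the contribution $8\binom{g+1}{3}$ from the tori, and match the total against Lemma~\ref{lem1}(2). You have in fact been slightly more careful than the paper, which cites Lemma~\ref{lem1}(1) by a typo and leaves the arithmetic verification $8\binom{g+1}{3}+2(g+1)(g+2)=4+\tfrac{14g}{3}+2g^2+\tfrac{4g^3}{3}$ to the reader.
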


\begin{proof}
  By lemma~\ref{components}, $\mathrm{Sym}^3(X)^{\sigma^3}$ has ${g+1}\choose{3}$ components, disjoint copies of $S^1 \times S^1 \times S^1$, each of which has the sum of Betti numbers 8. Since the sum of the Betti numbers of each $B_i$ is $2(g+2)$ and there are $g+1$ of them, the total sum of the Betti numbers is $8 {{g+1}\choose{3}} + (g+1)2(g+2)$.

Now the proposition follows from Lemma \ref{lem1}(1).
\end{proof}

\section{Picard bundles and higher symmetric powers}

\begin{theorem}
Let $(X\,,\sigma)$ be an M-curve of genus $g$. For any $n \,\geq\, 2g-1$, the
symmetric product $(\mathrm{Sym}^n(X)\, ,\sigma^n)$ is also an M-variety.
\end{theorem}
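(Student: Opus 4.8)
The plan is to use the Picard bundle to relate the topology of $\mathrm{Sym}^n(X)$ to that of the Jacobian, together with the characterization (from \cite{biswas}) that $(X,\sigma)$ is an M-curve if and only if its Jacobian $J(X)$ (with the induced real structure) is an M-variety. For $n\geq 2g-1$, the Abel--Jacobi map
$$
\phi\,:\,\mathrm{Sym}^n(X)\,\longrightarrow\,\mathrm{Pic}^n(X)
$$
is a projective bundle: its fibres are the linear systems $|D|\cong\mathbb{P}^{n-g}$, which all have the same dimension $n-g$ by Riemann--Roch since $\deg D = n \geq 2g-1$. Thus $\mathrm{Sym}^n(X)$ is the projectivization $\mathbb{P}(E)$ of the (rank $n-g+1$) Picard bundle $E$ over $\mathrm{Pic}^n(X)$. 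Moreover $\phi$ is defined over $\mathbb{R}$: the real structure $\sigma^n$ on $\mathrm{Sym}^n(X)$ is compatible with the induced anti-holomorphic involution on $\mathrm{Pic}^n(X)\cong J(X)$, and $\phi$ intertwines them.

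The first step is to compute the mod $2$ Betti number sum of $\mathrm{Sym}^n(X)$. Since $\mathbb{CP}^{n-g}$ has no odd cohomology and $H^*(\mathbb{CP}^{n-g};\mathbb{Z}/2\mathbb{Z})$ is free of rank $n-g+1$, the Leray--Hirsch theorem gives
$$
\sum_i \dim H_i(\mathrm{Sym}^n(X);\mathbb{Z}/2\mathbb{Z}) \,=\, (n-g+1)\sum_i \dim H_i(J(X);\mathbb{Z}/2\mathbb{Z}) \,=\, (n-g+1)\,2^{2g},
$$
using $J(X)\cong (S^1)^{2g}$ as a real torus. (One should check Leray--Hirsch applies with $\mathbb{Z}/2\mathbb{Z}$ coefficients: the hyperplane class of each fibre extends globally, e.g. as $c_1(\mathcal{O}_{\mathbb{P}(E)}(1))$ reduced mod $2$, so the cohomology of the total space is a free module over the base on the powers of this class.) Alternatively this count can be cross-checked against the generating function $(1+xt)^{2g}/((1-t)(1-x^2t))$ from Lemma~\ref{lem1} evaluated at $x=1$.

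The second step is to compute the mod $2$ Betti number sum of the real locus $\mathrm{Sym}^n(X)^{\sigma^n}$. The restriction of $\phi$ to real points is a fibre bundle over $\mathrm{Pic}^n(X)^{\sigma}$ whose fibres are the real points of the fibres of $\phi$. Here one uses that $(X,\sigma)$ being an M-curve forces, for a $\sigma$-invariant divisor class of degree $n\geq 2g-1$, the real linear system to be a genuine real projective space $\mathbb{RP}^{n-g}$ (the relevant line bundle, or an appropriate twist, has a real section because the real locus is nonempty and $n$ is large; more precisely one invokes that $H^0$ of a real line bundle of degree $\geq 2g-1$ on an M-curve has a basis of real sections). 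Since $(X,\sigma)$ is an M-curve, $J(X)$ is an M-variety by \cite{biswas}, so $\mathrm{Pic}^n(X)^{\sigma}$ is a disjoint union of $2^g$ real tori $(S^1)^g$ (the real locus of $J(X)$ has $2^g$ components, as $J(X)^\sigma \cong (\mathbb{R}/\mathbb{Z})^g \times (\mathbb{Z}/2\mathbb{Z})^g$ for an M-curve), with total mod $2$ Betti sum $2^g\cdot 2^g = 2^{2g}$. Applying Leray--Hirsch again to the real $\mathbb{RP}^{n-g}$-bundle over $\mathrm{Pic}^n(X)^\sigma$ — here the global cohomology extension class is the mod $2$ reduction of the hyperplane class, which exists because $\mathbb{RP}^{n-g}$ has all its $\mathbb{Z}/2\mathbb{Z}$-cohomology generated by powers of the hyperplane class and this class extends — we get
$$
\sum_i \dim H_i(\mathrm{Sym}^n(X)^{\sigma^n};\mathbb{Z}/2\mathbb{Z}) \,=\, (n-g+1)\cdot 2^{2g}.
$$

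Comparing the two computations, the Smith--Thom inequality \eqref{smith} is an equality, so $(\mathrm{Sym}^n(X),\sigma^n)$ is an M-variety. The main obstacle I expect is the second step: justifying that the real Abel--Jacobi map is honestly a fibre bundle over \emph{all} of $\mathrm{Pic}^n(X)^{\sigma}$ with fibre $\mathbb{RP}^{n-g}$ of constant dimension, i.e. that every real divisor class of degree $n$ on an M-curve is effective and has the expected-dimensional real linear system. This uses the M-curve hypothesis in an essential way (on a non-maximal curve some real line bundles of high degree can fail to have real sections, or the real locus of a fibre could be empty, breaking the bundle structure), and it is precisely the place where the result fails to extend to smaller $n$. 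Verifying the Leray--Hirsch hypotheses with $\mathbb{Z}/2\mathbb{Z}$ coefficients for the real bundle, and confirming the count $2^{2g}$ for $\mathrm{Pic}^n(X)^\sigma$ from the M-variety property of $J(X)$, are the remaining technical points.
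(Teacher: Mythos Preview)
Your approach is essentially identical to the paper's: realize $\mathrm{Sym}^n(X)$ as a $\mathbb{P}^{n-g}$-bundle over the Jacobian via Abel--Jacobi, invoke \cite{biswas} for the M-property of the Jacobian, and conclude via Leray--Hirsch on both the complex and real sides. The only cosmetic difference is that the paper translates to $\mathrm{Pic}^0(X)$ by fixing $p\in X^\sigma$, while you work directly in $\mathrm{Pic}^n(X)$.

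The gap you yourself flag---that the real Abel--Jacobi map surjects onto $\mathrm{Pic}^n(X)^\sigma$ with every fibre a genuine $\mathbb{RP}^{n-g}$---is exactly what the paper spends its effort on, and your parenthetical ``the relevant line bundle \dots\ has a real section because the real locus is nonempty'' is the right hint but not yet an argument. The paper's resolution is: a $\sigma^*$-invariant line bundle on $X$ carries either a real or a quaternionic structure, and the quaternionic case is impossible when $X^\sigma\neq\varnothing$ (citing \cite{biswashuismanhurtubise}, \cite{bhoslebiswas}); once a real structure $\widetilde\sigma$ on the line bundle is fixed, a nonzero $\widetilde\sigma$-invariant section is produced by $s\mapsto s+\widetilde\sigma(s)$, replacing $s$ by $\sqrt{-1}\,s$ if this happens to vanish. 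This gives the real effective divisor over each real point of the Picard variety, hence the $\mathbb{RP}^{n-g}$-bundle structure you need. Note that only $X^\sigma\neq\varnothing$ is used here, not the full M-curve hypothesis; the latter enters solely through the M-property of the Jacobian.
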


\begin{proof}
Note that if the real part of $(X\,,\sigma)$ is empty, it cannot be an M-curve. Therefore, 
in what follows, we will assume that the real part of $(X\,,\sigma)$ is non-empty.

Fix a point $p\, \in\, X^\sigma$. Take any $n\,\geq\, 2g-1$.
Consider the map
\begin{equation}\label{u}
u\,:\,\mathrm{Sym}^n(X) \,\longrightarrow \,\mathrm{Pic}^0(X)
\end{equation}
defined by $u((x_1\, 
,x_2\, ,\ldots\, ,x_n))\,=\, \mathcal{O}_X(\sum_{i=1}^n (x_i - p))$. The map $u$ is
surjective. Indeed, for any $L\,\in\, \mathrm{Pic}^0(X)$, the line bundle
$L\otimes {\mathcal O}_X(np)$ admits nonzero holomorphic sections because by
Riemann--Roch,
$$
\dim H^0(X,\, L\otimes {\mathcal O}_X(np)) - \dim H^1(X,\, L\otimes {\mathcal O}_X(np))
\,=\, n -g +1\, >\, 0\, .
$$
Therefore, if $s$ is a nonzero holomorphic section of $L\otimes {\mathcal O}_X(np)$, then
we have
$$
L\, =\, u(\text{divisor}(s))\, .
$$
Hence $u$ is surjective. The fiber of $u$ over $L$ is the space of effective divisors on
$X$ of degree $n$ linearly equivalent to $L\otimes {\mathcal O}_X(np)$. Using
Serre duality,
$$
H^1(X,\, L\otimes {\mathcal O}_X(np))\,=\, H^0(X,\, L^*\otimes {\mathcal O}_X(-np)\otimes
K_X)^*\, =\, 0
$$
because
$$
\text{degree}(L^*\otimes {\mathcal O}_X(-np)\otimes K_X)\,=\, -n+2g-2\, <\, 0\, .
$$
Hence by Riemann--Roch, we have
$$
\dim H^0(X,\, L\otimes {\mathcal O}_X(np))\,=\, n-g+1\, .
$$
Therefore, $u$ makes $\mathrm{Sym}^n(X)$ a projective bundle over $\mathrm{Pic}^0(X)$
of relative dimension $n-g$; see \cite[Ch.~IV, \S~3]{arbarello1985geometry} for more
details.

Let
$$
\sigma^*\, :\, \mathrm{Pic}^0(X)\,\longrightarrow\, \mathrm{Pic}^0(X)\, , \ \
\mathcal{O}_X(D)\, \longmapsto\, \mathcal{O}_X(\sigma(D))
$$
be the anti-holomorphic involution given by $\sigma$. Note that for a meromorphic function
$f$ on $X$, we have $\sigma(\text{divisor}(f))\,=\, \text{divisor}
(\overline{f\circ\sigma})$. Therefore, $\sigma^*$ is well-defined, and
$\sigma^n$ takes fibers of the map $u$ in \eqref{u} to fibers of $u$. We now prove that
the restriction of $u$ to $\mathrm{Sym}^n(X)^{\sigma^n}$ surjects onto 
$\mathrm{Pic}^0(X)^{\sigma^*}$.

Take any $\mathcal{O}_X(D)\,\in\, 
\mathrm{Pic}^0(X)^{\sigma^*}$. Since the real part of the curve is non-empty, the line 
bundle $\mathcal{O}_X(D)$ cannot have a quaternionic 
structure~\cite[p. 206, Propositon 3.1 and p. 210, \S~4.2]{biswashuismanhurtubise},
\cite{bhoslebiswas} (there are no quaternionic bundles of 
odd rank on $X$ because $X^\sigma$ is nonempty). Therefore, the line bundle
$\mathcal{O}_X(D)$ has a real structure, i.e., there is a lift 
$\widetilde{\sigma}$ of $\sigma$ to $\mathcal{O}_X(D)$ such that $\widetilde{\sigma} \circ 
\widetilde{\sigma}$ is the identity map of $\mathcal{O}_X(D)$, and $\widetilde{\sigma}$
is fiberwise conjugate linear. For any holomorphic section $s$ of $\mathcal{O}_X(D+
n.p)$, the section $\varphi(s) \,:= \,s+\widetilde{\sigma}(s)$ is clearly fixed by
$\widetilde{\sigma}$.

We will show that there is a holomorphic section $s$ such that $\varphi(s)\, \not=\, 0$.
For this, first note that the homomorphism
$$
H^0(X,\, \mathcal{O}_X(D+n.p))\, \longrightarrow\,
H^0(X,\, \mathcal{O}_X(D+n.p))\, , \ \ s\, \longmapsto\,
\widetilde{\sigma}(s)
$$
is conjugate linear. Hence if $s'$ is a nonzero holomorphic section such that
$\varphi(s')\,=\, s'+\widetilde{\sigma}(s')\,=\, 0$, then
$$
\varphi(\sqrt{-1}s')\,=\, \sqrt{-1}s'+\widetilde{\sigma}(\sqrt{-1}s')\,=\,
\sqrt{-1}s'-\sqrt{-1}\widetilde{\sigma}(s')\,=\, 2\sqrt{-1}s'\, .
$$
So $\varphi(\sqrt{-1}s')\, \not=\, 0$ because $s'\, \not=\, 0$.

Take any $s\,\in\, H^0(X,\, \mathcal{O}_X(D+n.p))$ such that $\varphi(s)\, \not=\, 0$.
Therefore, $D_s\, :=\, \text{divisor}(s)$ is an element of $\mathrm{Sym}^n(X)^{\sigma^n}$
such that $u(D_s)\,=\, \mathcal{O}_X(D)\,\in\, \mathrm{Pic}^0(X)^{\sigma^*}$. Hence
\begin{equation}\label{ur}
u\vert_{\mathrm{Sym}^n(X)^{\sigma^n}}\, :\, \mathrm{Sym}^n(X)^{\sigma^n}\,\longrightarrow
\, \mathrm{Pic}^0(X)^{\sigma^*}
\end{equation}
is surjective.

Consequently, the map in \eqref{ur} makes $\mathrm{Sym}^n(X)^{\sigma^n}$ a real projective
bundle over $\mathrm{Pic}^0(X)^{\sigma^*}$ of fixed relative dimension $n-g$.

If $X$ is an M-curve, then $\mathrm{Pic}^0(X)$ is an M-variety \cite[Proposition 2.3]{biswas}. 
The complex projective space with the standard anti-holomorphic
involution is also an M-variety. Now it follows from the Leray--Hirsch 
theorem that $\mathrm{Sym}^n(X)$ is an M-variety.
\end{proof}

\end{document}